\title[Uniqueness properties for discrete equations and Carleman estimates]{\textbf{Uniqueness properties for discrete equations and Carleman estimates}}
\author{Aingeru Fern\'andez-Bertolin}
\author{Luis Vega}
\subjclass[2010]{ 35Q41, 39A12}
\keywords{Discrete Hardy uncertainty principle, Carleman estimates, Unique continuation}
\date{\today}
\address{A. Fern\'andez-Bertolin: Departamento de Matem\'aticas,  Universidad del Pa\'is Vasco UPV/EHU, apartado 644, 48080, Bilbao, Spain}
\email{aingeru.fernandez@ehu.eus}
\address{L. Vega: Departamento de Matem\'aticas,  Universidad del Pa\'is Vasco UPV/EHU, apartado 644, 48080, Bilbao, Spain
\newline
\null\hspace{1.95cm}Basque Center for Applied Mathematics BCAM, Alameda de Mazarredo 14, 48009 Bilbao, Spain}
\email{luis.vega@ehu.eus}
\newtheorem{theorem}{Theorem}[section]
\newtheorem{lemma}{Lemma}[section]
\newtheorem{corollary}{Corollary}[section]
\theoremstyle{remark}
\newtheorem{remark}{Remark}[section]
\newcommand{\sumzd}{\sum_{j\in\mathbb{Z}^d}}
\begin{document}
%

%
\maketitle

\begin{abstract}
Using Carleman estimates, we give a lower bound for solutions to the discrete Schr\"odinger equation in both dynamic and stationary settings that allows us to prove uniqueness results, under some assumptions on the decay of the solutions. 
\end{abstract}

\section{Introduction}
The aim of this paper is to continue the study started in \cite{fb1,fb2} to prove uniqueness properties for functions $u\in C^1([0,1],\ell^2(\mathbb{Z}^d))$ which satisfy the property 
\begin{equation}\label{schr}
\left|i\partial_t u_j+\Delta_d u_j\right|\le |V_j u_j|,\ \ t\in[0,1],\ j\in\mathbb{Z}^d.
\end{equation}
with bounded potential $V$, under the assumptions that the function $u$ has fast decay at times $t=0$ and $t=1$. Here $\Delta_d$ stands for the discrete Laplace operator
\[
\Delta_du_j=\sum_{k=1}^d(u_{j+e_k}+u_{j-e_k}-2u_j),\ \ j\in\mathbb{Z}^d,
\]
where $e_k$ is the standard basis of $\mathbb{R}^d$.

In particular, all the results we give can be written in terms of solutions to the discrete Schr\"odinger equation
\begin{equation}
i\partial_t u_j+\Delta_d u_j+V_j u_j=0,\ \ t\in[0,1],\ j\in\mathbb{Z}^d.
\end{equation}

In the continuous case, these results are related to the Hardy uncertainty principle for the Fourier transform:
\[\begin{aligned}
&|f(x)|\le C e^{-|x|^2/\beta^2},\ \ |\hat{f}(\xi)|\le C e^{-4|\xi|^2/\alpha^2},\ \text{and }1/\alpha\beta >1/4\Longrightarrow f\equiv0.\\&
\text{If }1/\alpha\beta=\frac14,\text{ then } f(x)=ce^{-|x|^2/\beta^2}.
\end{aligned}\]

The relation comes from the fact that basically the solution to the free Schr\"odinger equation, $i\partial_t u+\Delta u=0$, has the same size as the Fourier transform of an appropriately modulated initial datum, so then the Hardy uncertainty principle can be stated, in an $L^2$ setting, as follows:

\[
\|e^{\alpha |x|^2}u(0)\|_{L^2(\mathbb{R}^d)}+\|e^{\beta |x|^2}u(1)\|_{L^2(\mathbb{R}^d)}<+\infty,\ \ \alpha\beta>\frac{1}{16} \Rightarrow u\equiv0.
\]

The classical proof of the Hardy uncertainty principle is based on complex analysis arguments (Phragm\'en-Lindel\"of principle and properties of entire functions), but in the dynamical context there is a series of papers, \cite{cekpv, ekpv1,ekpv2,ekpv3,ekpv4}, where the authors prove the Hardy uncertainty principle using real variable methods. Furthermore, not only do they prove their results for the free evolution, but they also include a potential term $Vu$ to the Schr\"odinger equation, under some size constraints for the potential $V$ but without any regularity assumption on it.  The main techniques in the proof of their results are log-convexity properties for solutions with Gaussian decay and Carleman estimates.

In the discrete setting, the first thing we have to understand is how to replace the Gaussian decay, so in \cite{fb1} we give an analogous version of the Hardy uncertainty principle by using complex analysis arguments that suggests that the discrete version of the Gaussian we have to consider is the product of modified Bessel functions, given by the following integral representation,
\[
I_m(x)=\frac{1}{\pi}\int_0^\pi e^{z\cos\theta}\cos(m\theta), \ m\in\mathbb{Z}.
\]

This product of modified Bessel functions appears naturally if we understand the Gaussian as the minimizer of the Heisenberg uncertainty principle. When we take discrete versions of the position and momentum operators to give a discrete Heisenberg principle, it turns out that the minimizer is precisely the product of modified Bessel functions. On the other hand, we can also understand the Gaussian as the fundamental solution to the heat equation, and, again, the fundamental solution in the discrete setting is given in terms of modified Bessel functions, whose decay is, for $n$ large,
\begin{equation}\label{decaybessel}
I_n(z)\sim \frac{1}{\sqrt{2\pi n}}\left(\frac{ez}{2}\right)^n e^{-n\log n},
\end{equation}
and we see here that for $n$ large, $I_n(z)$ decays like $e^{-n\log n}$.

Once we had a discrete version of the Hardy uncertainty principle proved by complex analysis, we proved in \cite{fb2} some log-convexity properties for solutions to the discrete Schr\"odinger equation with discrete Gaussian decay, where not only did we understand the Gaussian decay as above, but we also used other discrete versions of the Gaussian function. Then, by using Carleman estimates we could only give a preliminary result, that said that a solution cannot decay faster than $e^{-\mu|j|^2}$ at two different times. Looking at the behavior of modified Bessel functions, it is clear that this is far from the sharp result. Actually, independently in \cite{jlmp} it has recently been proved, for bounded and real-valued potentials, that in the one-dimensional case a solution cannot decay faster than $e^{-\mu|j|\log|j|}$ for $\mu>\frac{3+\sqrt{3}}{2}$ .

In this paper we try a different approach, exploited in \cite{ekpv1,ekpvkdv} in the continuous setting for Schr\"odinger and KdV equations and also based on log-convexity properties and Carleman estimates, in order to improve the result in \cite{fb2}. The main difference comes from the fact that first we prove the following lower bound for the solution:
\begin{theorem}[Lower bound for solutions to Schr\"odinger equations]\label{thm11}
Let $u\in C^1([0,1]:\ell^2(\mathbb{Z}^d))$ satisfying \eqref{schr} be such that
\[
\int_0^1\sum_{j\in\mathbb{Z}^d}|u_j(t)|^2\,dt\le A^2,\ 
\int_{1/2-1/8}^{1/2+1/8}|u(0,t)|^2\,dt\ge1.
\]

Let $V$ be such that
\[
\|V\|_{\infty}=\sup_{t\in[0,1],j\in\mathbb{Z}^d}\{|V_j(t)|\}\le L,
\]
then there exists $R_0=R_0(d,A,L)>0$ and $c=c(d)$ such that for $R\ge R_0$ it follows that
\[
\lambda(R)\equiv \left(\int_{0}^{1}\sum_{R-2\le |j|\le R+1}|u(j,t)|^2\right)^{1/2}\ge c e^{-cR \log R}.
\]
\end{theorem}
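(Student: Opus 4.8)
The plan is to adapt the Carleman-estimate strategy used in the continuous setting (as in \cite{ekpv1,ekpvkdv}) to the discrete Schr\"odinger equation. The first ingredient I would establish is a \emph{quantitative lower bound propagation}: starting from the hypothesis that $u$ is not small near the origin on the time interval $[1/2-1/8,1/2+1/8]$, one propagates this non-smallness to the whole cylinder $[0,1]\times\mathbb{Z}^d$ by an energy/log-convexity argument, so that $\int_0^1\sum_{|j|\le 1}|u_j(t)|^2\,dt$ is bounded below by a constant depending only on $d,A,L$. This is the discrete analogue of the smallness-implies-smallness lemmas in the continuous theory and uses only the $L^2$ bound $A^2$, the potential bound $L$, and standard commutator estimates for $\Delta_d$.

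The core of the argument is a Carleman inequality for the discrete operator $i\partial_t+\Delta_d$ with a weight of the form $e^{\phi}$ where $\phi$ grows like $R\log R$ in the spatial variable. Because of the asymptotics \eqref{decaybessel} of the modified Bessel functions, the natural weight on the discrete lattice is not quadratic but of the type $\phi(j)\sim |j|\log|j|$ (truncated near $R$), reflecting the fact that discrete solutions can decay at most like $e^{-cR\log R}$; this is the key structural difference from \cite{fb2}, where a Gaussian (quadratic) weight was used and gave only the weaker $e^{-\mu|j|^2}$ threshold. I would introduce a radial, time-dependent weight $\varphi_R(j,t)$, conjugate the equation, split the conjugated operator into its symmetric and antisymmetric parts, and bound the commutator from below; the discreteness forces one to work with finite differences rather than derivatives, so the convexity of $\varphi_R$ must be checked at the level of second differences, and one has to control error terms coming from the fact that $\Delta_d$ does not commute cleanly with multiplication by $e^{\varphi_R}$.

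With the Carleman estimate in hand, the final step is the usual bootstrap: apply it to $\theta u$ where $\theta$ is a smooth spatial cutoff that is $1$ on $\{|j|\le R-2\}$ and $0$ on $\{|j|\ge R+1\}$, so that $(i\partial_t+\Delta_d)(\theta u)$ is supported (up to the potential term, absorbed using $\|V\|_\infty\le L$ and $R$ large) in the annulus $R-2\le|j|\le R+1$. The left-hand side of the Carleman inequality is then bounded below using the lower bound near the origin from step one (which contributes $e^{\varphi_R(0)}\sim e^{cR\log R}$ times a fixed constant), while the right-hand side is controlled by $e^{\varphi_R}$ on the annulus times $\lambda(R)$ plus lower-order terms; rearranging and choosing $R_0$ large enough to absorb $A$ and $L$ yields $\lambda(R)\ge c e^{-cR\log R}$.

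The main obstacle I anticipate is the construction of the spatial weight and the verification of the convexity inequality that drives the Carleman estimate: on $\mathbb{Z}^d$ one cannot freely differentiate, so the monotonicity/convexity of $\varphi_R(j,t)\sim |j|\log|j|$ has to be replaced by discrete second-difference inequalities, and the cross terms between the time-derivative part and the spatial part of the weight must be handled carefully to keep the commutator positive. A secondary difficulty is tracking the dependence of all constants on $d$, $A$, and $L$ so that $R_0=R_0(d,A,L)$ and the exponent constant $c=c(d)$ come out as claimed, which requires being somewhat explicit about how the potential term and the cutoff errors are absorbed when $R\ge R_0$.
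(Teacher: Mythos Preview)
Your overall architecture (Carleman estimate $+$ cutoff) is right, but the implementation you sketch diverges from the paper in two essential ways, and one of them is a genuine gap.

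First, the weight. The paper does \emph{not} use a weight of the shape $\phi(j)\sim|j|\log|j|$; it keeps a \emph{shifted quadratic} weight
\[
e^{\alpha\left|\frac{j}{R}+\varphi(t)e_1\right|^2},
\]
exactly as in the continuous Schr\"odinger/KdV papers \cite{ekpv1,ekpvkdv}. The $R\log R$ rate does not come from the geometry of the weight but from the constraint $\alpha\ge cR\log R$ that is forced in Lemma~2.1 when one tries to absorb the commutator terms containing $\varphi'$ and $\varphi''$ (these are the cross terms you worry about in your last paragraph). With a quadratic exponent the discrete conjugation produces explicit $\cosh$/$\sinh$ factors that are easy to manipulate; a weight of the form $|j|\log|j|$ would make the discrete second-difference analysis much less tractable and is not needed.

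Second --- and this is the gap --- your cutoff is purely spatial, $g=\theta u$, and you invoke a preliminary ``propagation'' step to get a lower bound on $u$ near $j=0$ for all $t\in[0,1]$. But with a purely spatial cutoff $g$ does not vanish at $t=0,1$, so the integration-by-parts that makes $i\partial_t$ symmetric (hence the inequality $\|Sf+Af\|_2^2\ge\langle[S,A]f,f\rangle$) picks up boundary terms you cannot control. The paper avoids this entirely via the time-dependent shift $\varphi(t)e_1$, with $\varphi\equiv 0$ near $t=0,1$ and $\varphi\equiv 3$ on $[3/8,5/8]$, together with a second cutoff $\mu$ in the shifted variable. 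This does two jobs at once: (i) at $t=0,1$ the supports of $\theta^R$ and $\mu(\cdot/R)$ are disjoint, so $g\equiv 0$ there and no boundary terms appear; (ii) on $[3/8,5/8]$ the origin sits at shifted radius $3$, hence inside the region where both cutoffs equal $1$ and at a strictly larger value of the weight than the annulus where the $\mu$-errors live. The hypothesis $\int_{3/8}^{5/8}|u(0,t)|^2\,dt\ge1$ is then used \emph{directly}, and your propagation step is unnecessary. If you want to salvage your route you would have to insert a time cutoff and then explain why the new errors it creates (supported where $u$ is only controlled by $A$, at weight comparable to the interior) do not swamp the lower bound; the shift trick is precisely the device that sidesteps this.
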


Then, by the use of similar log-convexity properties to those proved in \cite{fb2} we deduce the following result:

\begin{theorem}[Uniqueness result]\label{thm12}
Let $u\in C^1([0,1]:\ell^2(\mathbb{Z}^d))$ satisfying \eqref{schr} with $V$ a bounded potential. Then there exists $\mu_0=\mu_0(d)$ such that if, for $\mu>\mu_0$
\[
\sum_{j\in\mathbb{Z}^d}e^{2\mu|j|\log(|j|+1)}\big(|u_j(0)|^2+|u_j(1)|^2\big)<+\infty,
\]
then $u\equiv0$. Furtermore, if $d=1$, then we can take $\mu_0=1$.
\end{theorem}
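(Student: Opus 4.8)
The plan is to argue by contradiction, combining the lower bound of Theorem 1.1 with a log--convexity estimate that carries the decay hypothesis at the endpoints $t=0,1$ into the interior of $[0,1]$.

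\emph{Reduction to a normalized solution.} Suppose $u\not\equiv0$. From \eqref{schr} one gets $\|i\partial_tu(t)+\Delta_du(t)\|_{\ell^2}\le\|V\|_\infty\|u(t)\|_{\ell^2}$, hence $\|\partial_tu(t)\|_{\ell^2}\le(4d+\|V\|_\infty)\|u(t)\|_{\ell^2}$ since $\|\Delta_d\|_{\ell^2\to\ell^2}\le 4d$. Gronwall's inequality then gives $\|u(t)\|_{\ell^2}>0$ for every $t\in[0,1]$, so $\int_{1/2-1/8}^{1/2+1/8}\|u(t)\|_{\ell^2}^2\,dt>0$ and there is $j_0\in\mathbb{Z}^d$ with $N^2:=\int_{1/2-1/8}^{1/2+1/8}|u_{j_0}(t)|^2\,dt>0$. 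The translated, rescaled function $\tilde u_j(t)=u_{j+j_0}(t)/N$ still satisfies an inequality of the form \eqref{schr} with potential $\tilde V_j(t)=V_{j+j_0}(t)$, $\|\tilde V\|_\infty=\|V\|_\infty$, and it meets both normalizations of Theorem 1.1 with $A^2=\int_0^1\|u(t)\|_{\ell^2}^2\,dt/N^2<\infty$. Since $|j+j_0|\log(|j+j_0|+1)\ge(1-\varepsilon)|j|\log(|j|+1)$ for all large $|j|$, $\tilde u$ still satisfies the decay hypothesis at $t=0,1$ with $\mu$ replaced by $(1-\varepsilon)\mu$; we relabel it $\mu$ again, which only forces a harmless increase of $\mu_0$.

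\emph{Propagation of the decay.} Following the log--convexity arguments of \cite{fb2}, for a weight $\varphi$ comparable to $|j|\log(|j|+1)$ and a parameter $a>0$ comparable to $\mu$, the weighted energy $H_a(t)=\sum_{j\in\mathbb{Z}^d}e^{2a\varphi(j)}|u_j(t)|^2$ should obey a second--order differential inequality forcing $\log H_a$ to be convex up to an error depending only on $d$ and $\|V\|_\infty$; combined with the finiteness of $H_a(0)$ and $H_a(1)$ provided by the decay hypothesis, this gives $H_a(t)\le C\,H_a(0)^{1-t}H_a(1)^{t}\le C$ for all $t\in[0,1]$. Hence there is $\mu''>0$, comparable to $\mu$ with a ratio depending only on $d$, such that $\sum_{j\in\mathbb{Z}^d}e^{2\mu''|j|\log(|j|+1)}|u_j(t)|^2\le C$ uniformly for $t\in[0,1]$. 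As usual, a truncation/approximation step is needed to legitimize the formal computation at intermediate times, where finiteness of $H_a$ is not known a priori.

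\emph{Conclusion and main obstacle.} The uniform bound yields, for $R$ large,
\[
\lambda(R)=\Big(\int_0^1\sum_{R-2\le|j|\le R+1}|u_j(t)|^2\,dt\Big)^{1/2}\le C_d\,R^{(d-1)/2}e^{-\mu''(R-2)\log(R-1)}\le e^{-(\mu''-\varepsilon)R\log R},
\]
whereas Theorem 1.1 gives $\lambda(R)\ge c(d)\,e^{-c(d)R\log R}$ for $R\ge R_0(d,A,\|V\|_\infty)$. Choosing $\mu_0=\mu_0(d)$ so large that the chain of comparisons above forces $\mu''-\varepsilon>c(d)$ whenever $\mu>\mu_0$, the two bounds are incompatible for $R$ large, and therefore $u\equiv0$. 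The delicate point is the propagation step: conjugating $\Delta_d$ by $e^{a\varphi}$ produces coefficients of size $e^{a(\varphi(j)-\varphi(j\pm e_k))}\sim e^{O(a\log|j|)}$, which are unbounded in $j$, so the positivity of the relevant commutator — hence the log--convexity with a weight as strong as $|j|\log(|j|+1)$ — must be extracted from the precise structure of $\varphi$, exactly as in \cite{fb2}, and has to be paired with a careful truncation argument.
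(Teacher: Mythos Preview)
Your overall architecture---contradiction, normalize so that Theorem 1.1 applies, propagate the endpoint decay into the interior, then compare the resulting upper bound for $\lambda(R)$ with the lower bound---is exactly what the paper does, and your reduction step is a correct and more explicit version of the paper's ``by translation and dilation''.

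The real issue is the propagation step. You propose to run a log-convexity argument directly with a weight $\varphi$ comparable to $|j|\log(|j|+1)$, and you correctly identify the obstruction: conjugating $\Delta_d$ by $e^{a\varphi}$ produces multiplicative coefficients of size $e^{a(\varphi(j\pm e_k)-\varphi(j))}\sim e^{O(a\log|j|)}$, which are unbounded in $j$. You then defer to \cite{fb2} for the resolution, but \cite{fb2} does not establish log-convexity for such a weight; the commutator computation there does not obviously yield the needed positivity when the conjugated coefficients blow up, so as written this step is a genuine gap rather than a routine citation.

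The paper sidesteps this obstacle entirely. Its propagation lemma (Lemma 2.2) uses the \emph{linear} weight $e^{\beta\cdot j}$, for which the conjugated coefficients are $e^{\pm\beta_k}$---bounded uniformly in $j$---so the abstract log-convexity argument from \cite{ekpv2,fb2} goes through with a constant independent of $\beta$. One then multiplies the resulting inequality
\[
\sum_{j}e^{2\beta\cdot j}|u_j(t)|^2\le e^{C\|V\|_\infty}\sum_{j}e^{2\beta\cdot j}\big(|u_j(0)|^2+|u_j(1)|^2\big)
\]
by a suitable function of $\beta$ (in the paper, $e^{-2\cosh(\beta/\mu)/e}$) and integrates in $\beta$. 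The $\beta$-integral $\int e^{j\beta-2\cosh(\beta/\mu)/e}\,d\beta$ is a modified Bessel function $K_{\mu j}(2/e)$, whose growth is precisely $e^{\mu|j|\log|j|}$, producing Corollary 2.1. This superposition trick (cf.\ \cite{ekpv5}) converts an easy bounded-coefficient estimate into the strong-weight estimate you need, with no delicate commutator analysis and no truncation issues beyond those already handled in the linear-weight case. You should replace your direct approach by this one.
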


Notice that in the one-dimensional setting this result agrees with the one in \cite{jlmp}, and it gives an improvement on the constant $\mu_0$ that leads to the sharp result with this rate of decay, as conjectured in \cite{jlmp} and proved for time-independent potential with compact support. Furthermore, the good behavior of the function $e^{\alpha|j|^2}$ in $\mathbb{Z}^d$ allows us to give a similar result in higher dimensions. In higher dimensions, unfortunately this result does not provide a sharp result.
 
However, the approach we use here is not suitable if one wants to relate this result to the continuous Hardy uncertainty principle. In order to do that, we should include the mesh step of the lattice, typically denoted by $h$,  and study what happens when $h$ tends to zero. In that case, the relevant region that relates the discrete and continuous results is of the type $|j|h\lesssim1$. Now, the Carleman inequality (see Lemma \ref{lem21} below), which is the key element in this approach holds as long as one assumes that $\frac{|j|}{R}$ is bigger than some constant, with $R$ large enough. If we identify the role of $\frac{1}{R}$ with the role of $h$, we clearly see that this result is giving information in the region that is not related to the continuous setting, and, therefore, this result is purely discrete. In order to give a relation between the discrete and the continuous settings, the appropriate weights for the Carleman estimates and log-convexity properties should be related to the modified Bessel function, and, as in \cite{ekpv2}, some interior estimates on the gradient should be required.

On the other hand, all the arguments can be adapted to the stationary case, where now we consider functions $u\in\ell^2(\mathbb{Z}^d)$ such that
\begin{equation}\label{elliptic}
\left|\Delta_d u_j\right|\le V_j u_j,\ \ j\in\mathbb{Z}^d,
\end{equation}
so we also have a lower bound in the ring $\{R-2<|j|<R+1\}$ and a uniqueness result. However, one may expect to improve our results (see Corollary \ref{cor31}) although our methods can not give such an improvement. Furthermore, a lower bound for solutions to the continuous elliptic problem is proved in \cite{bk} but, instead of the ring $\{R-2<|j|<R+1\}$, the lower bound is attained in a ball of radius 1 centered at some point of the sphere of radius $R$. In that paper it is pointed out that such a lower bound is not known in the discrete setting and it is most likely to be false, since one can extend the identically 0 function in a ball in such a way that its discrete Laplacian is zero but the extended function is not zero. For the sake of completeness, we will construct an example of a $\ell^2(\mathbb{Z}^2)$ solution to $\Delta_d u_j+V_ju_j=0$ with bounded potential that  is 1 at the origin but vanishes on a ball of fixed radius centered at a sphere of radius $R$, for $R$ large enough. 

Nevertheless, the same method explained here can be used in the continuous setting, and as an easy application one can get uniqueness properties for solutions to the elliptic problem $\Delta u+Vu=0$, assuming that the potential is bounded and that a solution decays faster than $e^{-\eta |x|^{4/3}}$ for some $\eta>0$, which coincides with the sharp result proved in \cite{m}.

The paper is organized as follows: In Section 2 we prove the main results of the paper, the lower bound and the uniqueness property for the time-dependent problem \eqref{schr}. In Section 3 we see that, as opposed to the continuous case, using this method we cannot improve the rate of decay from the evolution problem to the stationary one, and we also give an example of a solution which shows that the behavior explained in \cite{bk} is not possible in the discrete setting.

\section{Uniqueness for solutions to discrete Scrh\"odinger equations}
Before proving the main results, we need a discrete Carleman inequality for discrete Schr\"odinger evolutions in the spirit of the method developed in \cite{ekpv1,ekpvkdv}. In the case of the continuous Schr\"odinger evolution, the condition in the Carleman parameter is $\alpha\ge cR^2$, which leads to the Gaussian decay as the sharp rate of decay. In the discrete setting, the analogous of this condition is $\alpha\ge cR\log R$, so in this case we should look for solutions with this rate of decay, instead of solutions with Gaussian decay.

In order to simplify the notation, we write the notation $\|\cdot\|_2=\|\cdot\|_{L^2([0,1],\ell^2(\mathbb{Z}^d))}$.

\begin{lemma}[Carleman inequality for Schr\"odinger evolutions]\label{lem21}
Let $\varphi:[0,1]\rightarrow\mathbb{R}$ be a smooth function, $\beta>0$ and $\gamma>\frac{\sqrt{d}}{2\beta}$. There exists $R_0=R_0(d,\|\varphi'||_\infty+\|\varphi''\|_\infty,\beta,\gamma)$ and $c=c(d,\|\varphi'\|_\infty+\|\varphi''\|_\infty)$ such that, if $R>R_0$, $\alpha\ge \gamma R\log R$ and $g\in C_0^1([0,1],\ell^2(\mathbb{Z}^d))$ has its support contained in the set
\[
\{(j,t):|j/R+\varphi(t) e_1|\ge \beta\}.
\]
then
\[\begin{aligned}
\sqrt{\sinh(2\alpha/R^2)}\sinh(2\alpha\beta/\sqrt{d}R)\|e^{\alpha\left|\frac{j}{R}+\varphi(t) e_1\right|^2}g\|_2\le c \|e^{\alpha\left|\frac{j}{R}+\varphi(t) e_1\right|^2}(i\partial_t+\Delta_d)g\|_2.
\end{aligned}\]
\end{lemma}
\begin{proof} Let $f_j=e^{\alpha\left|\frac{j}{R}+\varphi(t)e_1\right|^2}g_j$. If we write
\[
e^{\alpha\left|\frac{j}{R}+\varphi(t)e_1\right|^2}(i\partial_t+\Delta_d)g_j=Sf_j+Af_j,
\]
with $S$ and $A$ symmetric and skew-symmetric respectively, it turns out that ($\delta_{mn}$ denotes Kronecker's delta function)
\begin{equation}\label{Ssch}\begin{aligned}
Sf_j=& i\partial_t f_j-2d f_j+\sum_{k=1}^d\cosh\left(\frac{2\alpha}{R}\left(\frac{j_k+1/2}{R}+\varphi \delta_{1k}\right)\right)f_{j+e_k}
\\&+\sum_{k=1}^d\cosh\left(\frac{2\alpha}{R}\left(\frac{j_k-1/2}{R}+\varphi\delta_{1k}\right)\right)f_{j-e_k},
\end{aligned}
\end{equation}
\begin{equation}\label{Asch}\begin{aligned}
Af_j=& -2i\alpha \left(\frac{j_1}{R}+\varphi\right)\varphi' f_j-\sum_{k=1}^d\sinh\left(\frac{2\alpha}{R}\left(\frac{j_k+1/2}{R}+\varphi \delta_{1k}\right)\right)f_{j+e_k}
\\&+\sum_{k=1}^d\sinh\left(\frac{2\alpha}{R}\left(\frac{j_k-1/2}{R}+\varphi \delta_{1k}\right)\right)f_{j-e_k}.
\end{aligned}
\end{equation}

Moreover, an easy computation shows that
\[
\|e^{\alpha\left|\frac{j}{R}+\varphi(t) e_1\right|^2}(i\partial_t+\Delta_d)g\|_2^2=\langle Sf+Af,Sf+Af\rangle \ge \langle [S,A]f,f\rangle.
\]

After some calculations, we have that the commutator is given so that
\begin{equation}\label{Csch}\begin{aligned}
\langle [S,A]f,f\rangle =&4\sinh\left(\frac{2\alpha}{R^2}\right)\int \sum_{j\in\mathbb{Z}^d}\sum_{k=1}^d\sinh^2\left(\frac{2\alpha}{R}\left(\frac{j_k}{R}+\varphi \delta_{1k}\right)\right)|f_j|^2\\
&+4\sinh\left(\frac{2\alpha}{R^2}\right)\int \sum_{j\in\mathbb{Z}^d}\sum_{k=1}^d\left|\frac{f_{j+e_k}-f_{j-e_k}}{2}\right|^2\\&+2\alpha\int\sum_{j\in\mathbb{Z}^d}\left[\left(\frac{j_1}{R}+\varphi\right)\varphi''+(\varphi')^2\right]|f_j|^2.
\\&+\frac{8\alpha}{R}\int\sum_{j\in\mathbb{Z}^d}\varphi' \cosh\left(\frac{2\alpha}{R}\left(\frac{j_1+1/2}{R}+\varphi\right)\right)\Im(f_{j+e_1}\overline{f_j}).
\end{aligned}\end{equation}

We want to hide the third and fourth term in the last expression in a fraction of the positive terms. Let us fist focus on the first term. Using Cauchy-Schwarz inequality, we have
\[
\left|\int\sum_j\psi(j)\Im(f_{j+e_1}\overline{f_j})\right|\le \int\sum_j\frac{|\psi(j-e_1)+|\psi(j)|}{2}|f_j|^2,
\]
and, if $\psi(j)=\frac{8\alpha}{R}\varphi'\cosh\left(\frac{2\alpha}{R}\left(\frac{j_1+1/2}{R}+\varphi\right)\right)$,
\[
\frac{|\psi(j-e_1)+|\psi(j)|}{2}=\frac{8\alpha}{R}|\varphi'|\cosh\left(\frac{\alpha}{R^2}\right)\cosh\left(\frac{2\alpha}{R}\left|\frac{j_1}{R}+\varphi\right|\right).
\]

Hence, we can absorb the fourth term in the first one if we establish that
\[
\sum_{k=1}^d\sinh\left(\frac{2\alpha}{R^2}\right)\sinh^2\left(\frac{2\alpha}{R}\left(\frac{j_k}{R}+\varphi\right)\right)\ge \frac{8\alpha\|\varphi'\|_\infty}{R}\cosh\left(\frac{\alpha}{R^2}\right)\cosh\left(\frac{2\alpha}{R}\left|\frac{j_1}{R}+\varphi\right|\right),
\]
when $\left|\frac{j}{R}+\varphi e_1\right|\ge \beta.$ But, in this case, there exists $k$ such that $\left|\frac{j_k}{R}+\varphi \delta_{1k}\right|\ge \frac{\beta}{\sqrt{d}}.$ Thus, it is enough to prove that, if $b\ge0$, $a\ge \max(b,\mu/\sqrt{d})$, $t=\frac{2\alpha}{R}$, $\kappa=4\|\varphi'\|_\infty$,
\begin{equation}\label{4term}
\sinh\left(\frac{t}{R}\right)\sinh^2(at)\ge \kappa t\cosh\left(\frac{t}{2R}\right)\cosh(bt),
\end{equation}
when $R>R_0$ and $t\ge 2\gamma\log R$. First, if $t\ge R$, $R\ge \max\{\sqrt{d}/\beta,\log(8k)/a\}$,
\[
\sinh\left(\frac{t}{R}\right)\ge \cosh\left(\frac{t}{2R}\right),\ \ \sinh(at)\ge \cosh(at)/4\ge \cosh(bt)/4,\ \ \sinh(at)\ge 4\kappa t,
\]
and, therefore, \eqref{4term} is established. Now, if $2\gamma\log R<t\le R$, $R\ge \sqrt{d}/\beta$,
\[
\sinh(at)\ge \cosh(at)/2\ge \cosh(bt)\cosh\left(\frac{t}{2R}\right)/(2e),\ \sinh(at)\ge e^{\beta t/\sqrt{d}}/4,\ \ \sinh\left(\frac{t}{R}\right)\ge \frac{t}{R},
\]
so we conclude
\[\sinh\left(\frac{t}{R}\right)\sinh^2(at)\ge \frac{e^{\beta t/\sqrt{d}}}{8 eR} t \cosh\left(\frac{t}{2R}\right)\cosh(bt).\]

It remains to see that $ \frac{exp\{\beta t/\sqrt{d}\}}{8 eR}\ge \kappa$. But, since $t>2\gamma\log R$,
\[
\frac{e^{\beta\mu t/\sqrt{d}}}{8 eR}\ge \frac{R^{\frac{2\beta\gamma}{\sqrt{d}}-1}}{8e},
\]
which, by the condition $\gamma>\frac{\sqrt{d}}{2\beta}$, is bigger than $\kappa$ for $R$ large enough depending on $\|\varphi'\|_\infty$, $\beta,\gamma$ and the dimension.

For the third term, using the same reasoning we have to prove that
\begin{equation}\label{3term}
\sinh\left(\frac{t}{R}\right)\sinh^2(at)\ge \kappa tRb,
\end{equation}
where the parameters $a,b,t$ are defined in the same way as above, and $\kappa=\|\varphi''\|_\infty.$ For $t\ge R$, $R/\log R\ge 2\sqrt{d}/\beta$,
\[
\sinh\left(\frac{t}{R}\right)\ge \frac{e}{4},\ \ \sinh(at)\ge bt,\ \sinh(at)\ge \sinh\left(\frac{\beta R}{\sqrt{d}}\right)\ge R^2,
\]
and \eqref{3term} is established if $R\ge \frac{\kappa 4}{e}$. Now, if $2\gamma \log R<t\le R$, $R\ge \sqrt{d}/\beta$
\[
\sinh\left(\frac{t}{R}\right)\ge \frac{t}{R},\ \sinh(at)\ge \frac{e^{bt}}{4},\ \sinh(at)\ge \frac{e^{\beta t/\sqrt{d}}}{4},
\]
and this implies that, for any $\epsilon>0$,
\[\sinh\left(\frac{t}{R}\right)\sinh^2(at)\ge \frac{e^{(2-\epsilon)\beta t/\sqrt{d}}e^{\epsilon bt}}{16R} t.\]

We want this quantity to be bigger than $\kappa t R b$. Using $e^{x}\ge x$ we reduce this to prove that
\begin{equation}\label{finaleq}
t e^{(2-\epsilon)\beta t/\sqrt{d}}\ge 16\kappa R^2/ \epsilon.
\end{equation}

Finally, thanks to the fact that $t>2\gamma\log R$, we have that for any $\gamma>\frac{\sqrt{d}}{2\beta}$ we can find $\epsilon>0$ such that \eqref{finaleq} holds for $R$ large enough, depending on the allowed parameters.   

Once that we have absorbed the third and fourth term in \eqref{Csch}, the inequality holds by using that $\left|\frac{j_k}{R}+\varphi \delta_{1k}\right|\ge \frac{\beta}{\sqrt{d}}$ for some $k=1,\dots,d$.
\end{proof}

Now we are going to use this lemma to prove the lower bound for a nonzero solution to the discrete Schr\"odinger equation. We recall that the fundamental solution to this equation is related to the Bessel function, $J_n(z)=I_n(-iz)i^{n}$, whose decay, for fixed $z$, is of the type $e^{-n\log n}$ when $n\rightarrow+\infty$, as in formula \eqref{decaybessel}.

\begin{proof}[Proof of Theorem \ref{thm11}] We define, for $\epsilon>0$ fixed, the $C^\infty(\mathbb{R}^d)$ cut-off functions $\theta^R(x),\mu(x)$ $(0\le \theta^R,\mu\le 1)$ and the $C^\infty([0,1])$ function $\varphi$ $(0\le \varphi\le 2+\epsilon^{-1})$ in the following way.
\begin{equation}\label{cutoff}
\theta^R(x)=\left\{\begin{array}{ll}1,&|x|\le R-1,\\0,&|x|\ge R,\end{array}\right.\ \ \mu(x)=\left\{\begin{array}{ll}1,&|x|\ge \epsilon^{-1}+1,\\0,&|x|\le \epsilon^{-1},\end{array}\right.\ \ \varphi(t)=\left\{\begin{array}{ll}2+\epsilon^{-1},&t\in[\frac12-\frac18,\frac12+\frac18],\\0,&t\in [0,\frac14]\cup[\frac34,1].\end{array}\right.
\end{equation}

\begin{remark}
The value of $\epsilon$ does not play an important role in the proof of Theorem \ref{thm11}. However, we use it because it will be crucial in order to quantify the best rate of decay we can give in order to achieve uniqueness in Theorem \ref{thm12} in the one-dimensional case.
\end{remark}

We are going to apply the previous lemma to
\[
g_j(t)=\theta_j^R\mu\left(\frac{j}{R}+\varphi(t)e_1\right)u_j(t),
\]
where $\theta_j^R=\theta^R(j)$. Notice that the evolution of $g$ is given by the expression
\begin{equation}\label{c}\begin{aligned}
(i\partial_t&+\Delta_d)g_j=i\varphi' \theta_j^R\partial_{x_1}\mu\left(\frac{j}{R}+\varphi e_1\right)u_j+\theta_j^R\mu\left(\frac{j}{R}+\varphi e_1\right)(i\partial_t u_j+\Delta_d u_j)\\&+\sum_{k=1}^d\left[\theta_{j+e_k}^R\left(\mu\left(\frac{j+e_k}{R}+\varphi e_1\right)-\mu\left(\frac{j}{R}+\varphi e_1\right)\right)+\mu\left(\frac{j}{R}+\varphi e_1\right)\left(\theta_{j+e_k}^R-\theta_{j}^{R}\right)\right]u_{j+e_k}\\&+\sum_{k=1}^d\left[\theta_{j-e_k}^R\left(\mu\left(\frac{j-e_k}{R}+\varphi e_1\right)-\mu\left(\frac{j}{R}+\varphi e_1\right)\right)+\mu\left(\frac{j}{R}+\varphi e_1\right)\left(\theta_{j-e_k}^R-\theta_{j}^{R}\right)\right]u_{j-e_k}.\end{aligned}\end{equation}

Thus, by paying with a dimensional constant $c_d$ and using \eqref{schr} we have
\begin{equation}\label{evolg}\begin{aligned}
\sqrt{\sinh(2\alpha/R^2)}&\sinh(2\alpha/\epsilon\sqrt{d}R)\|e^{\alpha\left|\frac{j}{R}+\varphi e_1\right|^2}g\|_2\le \|e^{\alpha\left|\frac{j}{R}+\varphi e_1\right|^2}(i\partial_t+\Delta_d)g\|_2\\ \le& L\|e^{\alpha\left|\frac{j}{R}+\varphi e_1\right|^2}g\|_2+c_d\left(\int_0^1\sum_{j\in\mathbb{Z}^d}e^{2\alpha\left|\frac{j}{R}+\varphi e_1\right|^2}\left|\partial_{x_1}\mu\left(\frac{j}{R}+\varphi e_1\right)\right|^2|u_{j}|^2dt\right)^{1/2}
\\&+c_d\left(\int_0^1\sum_{j\in\mathbb{Z}^d}\sum_{k=1}^de^{2\alpha\left|\frac{j}{R}+\varphi e_1\right|^2}\left|\mu\left(\frac{j+e_k}{R}+\varphi e_1\right)-\mu\left(\frac{j}{R}+\varphi e_1\right)\right|^2|u_{j+e_k}|^2dt\right)^{1/2}
\\&+c_d\left(\int_0^1\sum_{j\in\mathbb{Z}^d}\sum_{k=1}^de^{2\alpha\left|\frac{j}{R}+\varphi e_1\right|^2}\left|\mu\left(\frac{j-e_k}{R}+\varphi e_1\right)-\mu\left(\frac{j}{R}+\varphi e_1\right)\right|^2|u_{j-e_k}|^2dt\right)^{1/2}
\\&+c_d\left(\int_0^1\sum_{j\in\mathbb{Z}^d}\sum_{k=1}^de^{2\alpha\left|\frac{j}{R}+\varphi e_1\right|^2}\left|\theta^R_{j+e_k}-\theta^R_j\right|^2|u_{j+e_k}|^2dt\right)^{1/2}
\\&+c_d\left(\int_0^1\sum_{j\in\mathbb{Z}^d}\sum_{k=1}^de^{2\alpha\left|\frac{j}{R}+\varphi e_1\right|^2}\left|\theta^R_{j-e_k}-\theta^R_j\right|^2|u_{j-e_k}|^2dt\right)^{1/2}.
\end{aligned}\end{equation}

Now we study carefully the support of each term and we finish the proof taking $\alpha=cR\log R$ with $c=c(d,\epsilon)$ a constant satisfying the statement of the Carleman inequality. Indeed, with this choice of $\alpha$ the product of sinh functions in the left-hand side takes the form, for $R$ large,
\begin{equation}\label{beha}
\sqrt{2c\log R}R^{\frac{2c}{\epsilon\sqrt{d}}-\frac12},
\end{equation}
and, since $\frac{2c}{\epsilon\sqrt{d}}>1$ by Lemma \ref{lem21}, this grows with $R$, so we can absorb the term that comes from the potential taking $R$ large enough, depending on $L$. 

Now, by the definition of $\theta^R$ and $\mu$, we see that if $j=0$ and $t\in[1/2-1/8,1/2+1/8]$ then $\left|\frac{j}{R}+\varphi e_1\right|=2+\epsilon^{-1},$ so the cut-off functions are 1 and $g_0(t)=u_0(t)$. This allows us to bound the left-hand side of the Carleman inequality of the lemma by
\[
\|e^{\alpha\left|\frac{j}{R}+\varphi e_1\right|^2}g\|_2\ge e^{(2+\epsilon^{-1})^2\alpha},
\]
since $\int_{1/2-1/8}^{1/2+1/8}|u_0(t)|^2\ge 1$. 

On the other hand, we can use again the support of the cut-off functions to bound each term of the right-hand side in  \eqref{evolg}. In the first term, the one which involves a derivative of the function $\mu$, we see that in its support $\left|\frac{j}{R}+\varphi e_1\right|\le \epsilon^{-1}+1.$ 

For the terms involving the difference of $\theta$ functions, for each $k\in\{1,\dots,d\}$ we need to compute the coefficients where $\theta^R_{j\pm e_k}\ne \theta^R_j$, that is, we have to distinguish three different cases. First, when $\theta_{j\pm e_k}^R=1$, but  $\theta_{j}^R\ne1$, then when $\theta_{j\pm e_k}^R=0$, but  $\theta_{j}^R\ne0$, and finally when $0<\theta_{j\pm e_k}^R<1$, where since $|j\pm e_k|\ne |j|$ we have that $\theta^R_{j\pm e_k}\ne \theta^R_j$. After doing a shift in each term, it is easy to see that the difference is not zero in a region included in $\{R-2<|j|<R+1\}$, and we have that $\left| \frac{j}{R}+\varphi e_1\right|\le 3+\epsilon^{-1}+\frac{1}{R}$. Observe that when we bound $\left|\theta^R_{j\pm e_k}-\theta^R_j\right|\le 2$ for those that the difference does not vanish,we get the term $\lambda(R)$ of the statement. In this case, we also need to verify that $\mu(j/R+\varphi e_1)\ne0$. Indeed, if $t\in[\frac12-\frac18,\frac12+\frac18]$, and 
\[
\left|\frac{j}{R}+\varphi e_1\right|\ge 2+\epsilon^{-1} -1+\frac{1}{R}>\epsilon^{-1}+1.
\]

Finally, we have to see what happens with the terms involving the difference of $\mu$. In this case, we will see that if $\left| \frac{j}{R}+\varphi e_1\right|\ge \epsilon^{-1}+1+\frac{1}{R}$, then the function $\mu$ takes the same value at both points, so the difference is 0. Indeed, in this case we have that $\mu\left(\frac{j}{R}+\varphi e_1\right)=1$ and $\left|\frac{j}{R}+\varphi e_1\right|-\frac{1}{R}>0$. 

Therefore,
\[
\left|\frac{j\pm e_k}{R}+\varphi e_1\right|\ge \left| \left|\frac{j}{R}+\varphi e_1\right|-\frac{1}{R}\right| \ge \epsilon^{-1}+1+\frac{1}{R}-\frac{1}{R}=\epsilon^{-1}+1,
\]
so $\mu\left(\frac{j\pm e_k}{R}+\varphi e_1\right)=1$ as well. Gathering all these results we have, when $\alpha= cR\log R$ with $c>\frac{\epsilon\sqrt{d}}{2}$,
\begin{equation}\label{Scarl}\begin{aligned}
\sqrt{\sinh\left(\frac{2c\log R}{R}\right)}\sinh\left(\frac{2 c\log R}{\epsilon\sqrt{d}}\right)e^{cR\log R(2+\epsilon^{-1})^2}&\le c_{d,\epsilon}\left(e^{cR\log R\left(3+\epsilon^{-1}+\frac{1}{R}\right)^2}\lambda(R)\right.\\ &\left.+e^{cR\log R\left(\epsilon^{-1}+1+\frac{1}{R}\right)^2}A\right).
\end{aligned}\end{equation}

So for $R$ large enough, depending on $A$ (recall that before we showed that $R$ depends on $L$ as well) and $\epsilon$, which is a fixed number, we can absorb the second term in the right-hand side in the left-hand side and conclude
\[
1\le \sqrt{2c\log R}R^{\frac{2c}{\epsilon\sqrt{d}}-\frac{1}{2}}\le c_{d,\epsilon}e^{(5+2\epsilon^{-1})c R \log R+(6+2\epsilon^{-1})c\log R}\lambda(R),
\]
so
\begin{equation}\label{lambdaR}
\lambda(R)\ge c_{d,\epsilon}e^{-(5+2\epsilon^{-1})c R \log R-(6+2\epsilon^{-1})c\log R}
\end{equation}

\end{proof}

With this lower bound, we are able to prove the uniqueness result Theorem \ref{thm12}. For the sake of completeness, we recall the following result whose proof for solutions to the discrete Schr\"odinger equation can be found in \cite{fb2}. We recall that its proof is based on an abstract argument (see \cite[Lemma 2]{ekpv2}), and it is clear that the same proof works for functions satisfying \eqref{schr}.

\begin{lemma}\label{lem22}
Assume that $u$ satisfies \eqref{schr} where $V$ is a time-dependent bounded potential. Then, for $t\in[0,1]$ and $\beta\in\mathbb{R}^d$ we have
\begin{equation}\label{pesolin}
\sumzd e^{2\beta\cdot j}|u_j(t)|^2\le e^{C\|V\|_\infty}\sumzd e^{2\beta\cdot j}\big(|u_j(0)|^2+|u_j(1)|^2\big),
\end{equation}
where $C$ is independent of $\beta$, provided the left-hand side is finite.
\end{lemma}

From this result we can get a large variety of log-convexity properties for different weights, just by multiplication of \eqref{pesolin} with a proper function and integrating with respect to $\beta$, as it is explained in \cite{ekpv5}. For example, if we multiply it by the function $\exp(-2 \cosh(\beta/\mu)/e)$, we have that
\[
\int_\mathbb{R} e^{j\beta -2 \cosh(\beta/\mu)/e}\,d \beta =K_{\mu j}\left(\frac{2}{e}\right)\sim c \sqrt{2\mu |j|}e^{\mu |j|\log |j|+\mu |j| \mu \log \mu},
\] 
so the growth of this function is given by $e^{\mu |j|\log|j|}$ and we can adapt this to the multidimensional case to end up with a function that grows as $e^{\mu\|j\|_\star}$, where $\|j\|_\star=\sum_{k=1}^d|j_k|\log(|j_k|+1)$. On the other hand, it is easy to check that there is a dimensional constant $c_d$ such that
\[
\frac{\|j\|_\star}{c_d}\le |j|\log(|j|+1)\le c_d\|j\|_\star,
\]
hence, combining these two facts we have the following corollary.

\begin{corollary}\label{cor21}
Assume that $u=(u_j)_{j\in\mathbb{Z}^d}$ satisfies \eqref{schr} where $V$ is a time-dependent bounded potential. Then, for $\mu>0$ and $t\in[0,1]$, there are constants $c_0=c_0(d)$ and $c>0$ independent of $\mu$ and $t$ such that
\[
\sumzd e^{2\mu c_0|j|\log(|j|+1)}|u_j(t)|^2 \le e^{c\|V\|_\infty}\sumzd e^{2\mu |j|\log(|j|+1)}\big(|u_j(0)|^2+|u_j(1)|^2\big),
\]
provided that the right-hand side is finite.
\end{corollary}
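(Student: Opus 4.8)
The plan is to make rigorous the Laplace-transform heuristic sketched just above: integrate the linear-weight estimate \eqref{pesolin} against a well-chosen weight in the dual variable $\beta$, and recognise the resulting discrete weight as a product of modified Bessel functions of the second kind. Fix $\mu>0$ and assume the right-hand side of the asserted inequality is finite. Since $|\beta\cdot j|=o\big(|j|\log(|j|+1)\big)$ as $|j|\to\infty$, for every $\beta\in\mathbb{R}^d$ the series $\sumzd e^{2\beta\cdot j}\big(|u_j(0)|^2+|u_j(1)|^2\big)$ is then finite as well; together with a standard limiting argument (needed because \eqref{pesolin} is stated with its left-hand side assumed finite), this lets me apply \eqref{pesolin} for every $\beta\in\mathbb{R}^d$ with one and the same constant $C=C(\|V\|_\infty)$.

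Now I multiply \eqref{pesolin} by the nonnegative tensor weight
\[
g_\mu(\beta)=\prod_{k=1}^{d}\exp\!\Big(-\tfrac{2}{e}\cosh(\beta_k/\mu)\Big)
\]
and integrate over $\beta\in\mathbb{R}^d$. Every term being nonnegative, Tonelli's theorem permits exchanging the sum and the integral, giving
\[
\sumzd W_\mu(j)\,|u_j(t)|^2\ \le\ e^{C\|V\|_\infty}\sumzd W_\mu(j)\big(|u_j(0)|^2+|u_j(1)|^2\big),\qquad W_\mu(j):=\int_{\mathbb{R}^d}e^{2\beta\cdot j}\,g_\mu(\beta)\,d\beta,
\]
with exactly the same weight $W_\mu$ on both sides. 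By the integral representation $K_\nu(z)=\int_0^\infty e^{-z\cosh s}\cosh(\nu s)\,ds$ the weight factorises, $W_\mu(j)=\prod_{k=1}^{d}w_\mu(j_k)$, where $w_\mu(m)$ is a fixed multiple of $\mu\,K_{2\mu|m|}(2/e)$.

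Next I feed in the large-order behaviour of $K_\nu$ --- the same mechanism that governs \eqref{decaybessel} --- so that $w_\mu(m)$ is comparable, up to a factor polynomial in $\mu$ and $|m|$, to $e^{2\mu|m|\log(|m|+1)}$; this yields a two-sided estimate
\[
e^{\,2\mu c_1\|j\|_\star}\ \le\ W_\mu(j)\ \le\ e^{\,2\mu\|j\|_\star},\qquad \|j\|_\star=\sum_{k=1}^{d}|j_k|\log(|j_k|+1),
\]
with a dimensional constant $c_1\in(0,1)$. Inserting the lower bound on the left of the displayed inequality and the upper bound on the right, then replacing $\|j\|_\star$ by $|j|\log(|j|+1)$ via $\|j\|_\star/c_d\le|j|\log(|j|+1)\le c_d\|j\|_\star$ (and running the whole construction with the rescaled parameter $\mu/c_d$ in place of $\mu$, so that the coefficient surviving on the right is exactly $2\mu$), one lands on the corollary with $c_0=c_1/c_d^{2}$ and $c=C$, both independent of $\mu$ and of $t$.

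The step that needs genuine care is the two-sided control of $W_\mu$, and in particular its uniformity in $\mu$. Reading off the exponential rate $e^{2\mu|j|\log(|j|+1)}$ is straightforward for $\mu$ bounded below; what one must then handle are the Stirling/steepest-descent polynomial prefactors carried by $K_{2\mu|m|}(2/e)$. For $|j|$ large they are absorbed for free, since the exponent has slack $(1-c_1)\,2\mu\|j\|_\star$; for $|j|$ in a bounded range, and $\mu$ small, the large-order asymptotics no longer apply and $W_\mu$ has to be estimated directly from its integral representation (equivalently, after normalising $g_\mu$ by its total mass so that $W_\mu(0)=1$), which is where the freedom to make $c_0$ small is spent. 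For the uniqueness result that follows, Theorem 1.2, only $\mu>\mu_0(d)$ is needed, so the clean form of the argument already suffices downstream.
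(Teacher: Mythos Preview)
Your proposal is exactly the argument the paper sketches in the paragraph preceding the corollary: integrate \eqref{pesolin} against $\prod_k\exp(-\tfrac{2}{e}\cosh(\beta_k/\mu))$, recognise the resulting weight as a product of $K_{2\mu|j_k|}(2/e)$, read off the growth $e^{2\mu\|j\|_\star}$ from the large-order asymptotics, and convert $\|j\|_\star$ to $|j|\log(|j|+1)$ via the stated norm equivalence. You have merely made explicit the use of Tonelli, the limiting argument for \eqref{pesolin}, and the rescaling in $\mu$.

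One caveat worth recording: the clean two-sided estimate $e^{2\mu c_1\|j\|_\star}\le W_\mu(j)\le e^{2\mu\|j\|_\star}$ cannot hold with $c_1$ independent of $\mu$, because the Bessel asymptotics carry not only polynomial prefactors but an exponential factor $e^{2\mu|m|\log(2\mu)}$ (the paper displays this as the term $\mu|j|\log\mu$ in its asymptotic); at $j=0$ one already sees $W_\mu(0)=(2\mu K_0(2/e))^d\neq 1$. Neither the paper's sketch nor your write-up closes this uniformity issue, but---as you correctly note---Theorem~1.2 only invokes the corollary for a single large $\mu$, so the argument suffices for the intended application.
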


\begin{proof}[Proof of Theorem \ref{thm12}]
If $u$ is not zero, by translation and dilation, we may assume that $u$ satisfies
\[
\int_{1/2-1/8}^{1/2+1/8}|u(0,t)|^2\,dt\ge1,
\]
so that we can apply the previous theorem to find a lower bound for $\lambda(R)$. On the other hand, using the previous corollary we have that
\[
\sup_{t\in[0,1]}\sum_{j\in{\mathbb{Z}^d}}|u(j,t)|^2e^{2\mu c_0|j|\log|j|}<+\infty.
\]

Hence, from this property for the solution, we prove the following upper bound for the quantity $\lambda(R)$ defined above,
\[
\lambda(R)\le ce^{-\mu c_0R\log R },
\]
while, by the previous theorem we know that $\lambda(R)\ge ce^{-cR\log R}$ for some $c$ depending on the dimension. Therefore, if $\mu$ is large enough (we need $\mu$ to be larger than the quotient $\frac{c}{c_0}$, so it only depends on the dimension) by letting $R\rightarrow\infty$ we reach a contradiction, so $u\equiv0.$

We finish this proof comparing this result with the one given in \cite{jlmp}, where the it is required $\mu>\frac{3+\sqrt{3}}{2}$. In our case, if we set $d=1$, \eqref{lambdaR} implies that 
\[\lambda(R)\ge e^{-(5+2\epsilon^{-1})cR\log R-(6+2\epsilon^{-1})c\log R},\]
for any $c>\frac{\epsilon}{2}$ and $\epsilon>0$ fixed. Moreover, in Corollary \ref{cor21}, in the one dimensional case we can take $c_0=1-\delta$ for any $\delta>0$, so, to get a contradiction in Theorem \ref{thm12}, if we set $c=\frac{\epsilon}{2}+\epsilon^2$ we need 
\[
\mu>\frac{(5+2\epsilon^{-1})(\epsilon/2+\epsilon^2)}{1-\delta}=\frac{1+9\epsilon/2+5\epsilon^2}{1-\delta}.
\] Thus, as long as $\mu>1$ we can find $\epsilon$ and $\delta$ such that we conclude $u\equiv0$.
\end{proof}

\begin{remark}\label{rem21}
 In higher dimensions, due to the log-convexity property we are using to derive Corollary \ref{cor21} we get that the best value of $\mu$ depends on the dimension, so the optimality of the constant remains to be proved.
\end{remark}

\section{Uniqueness for the stationary problem}
Now we turn to the stationary problem in \eqref{schr}, and we get uniqueness from Theorem \ref{thm12} as an immediate consequence, since, obviously, a stationary function satisfying \eqref{schr} satisfies $\left|\Delta_d u_j\right|\le |V_ju_j|$. Moreover, we can give  a stationary version of Lemma \ref{lem21}, just by taking $\varphi\equiv 3$, which makes the commutator positive.

As a result we have the following corollary:

\begin{corollary}\label{cor31}
Let $u\in \ell^2$ be such that $|\Delta_d u_j|\le |V_ju_j|$,
\[
\|u\|_2\le A,\ \ |u(0)|\ge 1,\ \text{and}\ \|V\|_\infty\le L.
\]

Then there exist $R_0=R_0(d,L,A)$ and $c=c(d)$ such that for $R\ge R_0$,
\[
\lambda(R)\equiv\left(\sum_{R-2\le|j|\le R+1}|u_j|^2\right)^{1/2}\ge c e^{-cR\log R}.
\]

Furthermore, there is $\eta_0$ depending on the dimension such that if
\[
\sum_{j\in\mathbb{Z}^d}e^{2\eta_0|j|\log(|j|+1)}|u_j|^2<+\infty,
\]
then $u\equiv0.$
\end{corollary}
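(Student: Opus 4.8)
The plan is to mirror, in the stationary setting, the two-step strategy used for Theorems 1.1 and 1.2, replacing the time-dependent Carleman inequality by its stationary analogue and the log-convexity Lemma 2.2 by a purely spatial version. First I would record the stationary Carleman estimate obtained from Lemma 2.1 by freezing $\varphi\equiv 3$ (so all the $\varphi'$, $\varphi''$ terms in the commutator \eqref{Csch} disappear and the commutator is manifestly nonnegative): there is $c=c(d)$ such that
\[
\sqrt{\sinh(2\alpha/R^2)}\,\sinh(2\alpha/\sqrt{d}R)\,\|e^{\alpha|j/R+3e_1|^2}g\|_{\ell^2}\le c\,\|e^{\alpha|j/R+3e_1|^2}\Delta_d g\|_{\ell^2}
\]
for $\alpha\ge cR\log R$ and $g$ supported in $\{|j/R+3e_1|^2\ge 1\}$. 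For the lower bound, I would run the argument of the proof of Theorem 1.1 verbatim, but with no time integration: set $g_j=\theta^R_j\,\mu(j/R+3e_1)\,u_j$, compute $\Delta_d g_j$ and bound the commutator terms with $|\Delta_d u_j|\le L|u_j|$ to absorb the potential for $R\ge R_0(d,L)$. The cut-off bookkeeping is identical: on $|j|\le R-1$ one has $|j/R+3e_1|\ge 2+1/R$ so $g_j=u_j$ there, which together with $|u_0|\ge 1$ gives the lower bound $\|e^{\alpha|j/R+3e_1|^2}g\|_{\ell^2}\ge e^{\alpha(2+1/R)^2}$ on the left; the $\mu$-difference terms vanish where $|j/R+3e_1|\ge 2$, the $\theta^R$-difference terms localize to $\{R-2<|j|<R+1\}$ producing $\lambda(R)$ with weight $\le e^{\alpha(4+1/R)^2}$, and the $\partial_{x_1}\mu$ term is bounded by $A$ with weight $\le e^{\alpha(2+1/R)^2}$. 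Choosing $\alpha=cR\log R$ with $c=c(d)$ large, the left side \eqref{beha} grows in $R$, so for $R\ge R_0(d,A,L)$ one absorbs the $A$-term and concludes $\lambda(R)\ge c\,e^{-cR\log R}$ exactly as in \eqref{Scarl}.

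For the uniqueness claim I would first replace Lemma 2.2 by its time-independent counterpart: if $u\in\ell^2(\mathbb{Z}^d)$ satisfies $|\Delta_d u_j|\le|V_j u_j|$ with $\|V\|_\infty<\infty$, then $\sum_j e^{2\beta\cdot j}|u_j|^2<\infty$ for one, hence all, $\beta\in\mathbb{R}^d$ — this follows by the same abstract $H^{-1}$ positivity/convexity argument cited from \cite{ekpv2}, now with no boundary terms, since the elliptic operator $\Delta_d-V$ has no time variable to convexify in (alternatively, conjugate by $e^{\beta\cdot j}$ and use that $e^{\beta\cdot j}\Delta_d e^{-\beta\cdot j}-\Delta_d$ is a bounded operator to run a direct perturbative estimate). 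Then, integrating this bound against $\exp(-2\cosh(\beta_k/\mu)/e)$ in each coordinate exactly as in the passage preceding Corollary 2.1 yields: for $\mu>0$ there are $c_0=c_0(d)$ and $c$ independent of $\mu$ with
\[
\sum_{j\in\mathbb{Z}^d}e^{2\mu c_0|j|\log(|j|+1)}|u_j|^2\le e^{c\|V\|_\infty}\sum_{j\in\mathbb{Z}^d}e^{2\mu|j|\log(|j|+1)}|u_j|^2 ,
\]
provided the right side is finite.

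Finally I would close the argument by contradiction. Suppose $u\not\equiv 0$ and $\sum_j e^{2\eta_0|j|\log(|j|+1)}|u_j|^2<\infty$ with $\eta_0$ to be fixed. After a lattice translation we may take $u_0\neq 0$ and rescale so $|u_0|\ge 1$; set $A=\|u\|_{\ell^2}$. The lower bound above gives $\lambda(R)\ge c\,e^{-cR\log R}$ for $R\ge R_0$, while the spatial log-convexity bound (applied with $\mu$ corresponding to $\eta_0$) forces $\lambda(R)\le e^{-\mu c_0 R\log R+o(R\log R)}$; choosing $\eta_0$ so that $\mu c_0>c$ (which depends only on $d$) and letting $R\to\infty$ gives a contradiction. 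The main obstacle is the stationary version of Lemma 2.2: one must check that the abstract log-convexity machinery from \cite{ekpv2} still applies — or supply a self-contained perturbative proof — when there is no time interval, i.e. that $|\Delta_d u_j|\le |V_j u_j|$ together with $u\in\ell^2$ already propagates any single finite exponential moment to all of them; everything else is a routine transcription of Section 2 with the time integral deleted and $\varphi\equiv 3$.
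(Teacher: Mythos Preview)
Your lower-bound argument is essentially the paper's: freeze $\varphi\equiv 3$ in Lemma~2.1 so the commutator \eqref{Csch} is manifestly nonnegative, then localize with $\theta^R$ and $\mu$ and balance terms exactly as in the proof of Theorem~1.1. One small correction: with $\varphi'\equiv 0$ the term $i\varphi'\theta_j^R\,\partial_{x_1}\mu(\cdot)\,u_j$ in \eqref{c} vanishes identically, so there is no ``$\partial_{x_1}\mu$'' contribution; the $A$-term on the right of \eqref{Scarl} comes from the $\mu$-\emph{difference} terms, which are supported in $\{|j/R+3e_1|\le 2+1/R\}$ rather than $\{|j/R+3e_1|\le 2\}$.

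For the uniqueness part you take an unnecessary detour. The paper simply observes that a stationary $u$ with $|\Delta_d u_j|\le|V_ju_j|$, viewed as constant in $t$, lies in $C^1([0,1],\ell^2(\mathbb{Z}^d))$ and satisfies \eqref{schr}, so Theorem~1.2 applies verbatim with $\eta_0=\mu_0(d)$. Equivalently --- and this is the point --- once you have the stationary lower bound $\lambda(R)\ge c\,e^{-cR\log R}$, the matching upper bound is immediate from the hypothesis alone: since $u$ does not depend on $t$, the assumption $\sum_j e^{2\eta_0|j|\log(|j|+1)}|u_j|^2<\infty$ directly yields $\lambda(R)\le C\,e^{-\eta_0(R-2)\log(R-1)}$, and no log-convexity whatsoever is needed. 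The ``main obstacle'' you flag, a stationary analogue of Lemma~2.2 asserting that one finite exponential moment propagates to all of them, is neither required nor obviously available: the abstract argument from \cite{ekpv2} is a convexity-in-$t$ statement relating weighted norms at $t$ to those at $t=0$ and $t=1$, and does not transcribe to a purely spatial propagation-of-moments claim in the way you sketch. So your proposal is not incorrect, but it manufactures a difficulty that the paper sidesteps entirely by quoting Theorem~1.2.
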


\begin{remark}\label{rem31}
In this section, as opposed to the previous one, we consider $\|\cdot\|_2$ and $\|\cdot\|_\infty$ as the $\ell^2$ and $\ell^\infty$ norms of a sequence.
\end{remark}

As we have pointed out above, since the commutator in the Carleman inequality for the discrete Laplacian is positive, we do not need the extra assumption $\alpha\ge cR\log R$ required in the time-dependent case. Moreover, remember that in the application of the Carleman inequality, if we consider $\alpha=cR\log R$ the behavior of the left-hand side \eqref{beha} grows with $R$, so one could think that this result is most likely not sharp, and that we could consider $\alpha=c R\phi(R)$ with $1<\phi(R)<\log R$ and such that the product of the sinh functions in the Carleman inequality still grows with $R$ or we can make it independent of $R$ but as large as we want. However, in order to absorb the term that comes from the potential $V$ we require that
\[
\sinh(2\alpha/R^2)\sinh^2(2\alpha/\sqrt{d}R)\ge L^2,
\]
so, if $\alpha=cR\phi(R)$ with $1<\phi(R)<\log(R)$ we have that this is equivalent to
\[
\log(2c)+\log(\phi(R))+\frac{4c}{\sqrt{d}}\phi(R)-\log(R)\ge L^2,
\]
and, in order to satisfy this inequality, we need $\phi(R)$ to behave as $\log(R)$ when $R$ is large.

This result shows that the discrete and continuous settings exhibit different behaviors. On the one hand in the discrete setting the rate of decay is given by the exponential $e^{-c|j|\log(|j|+1)}$ while in the continuous case the solutions can decay faster. On the other hand, whereas in the discrete setting we get the same rate of the decay in our results, in the continuous setting the evolution and the stationary problems exhibit different rates of decay. In the case of Schr\"odinger, the decay is given in terms of Gaussians, while for the elliptic problem (see \cite{m}) the sharp rate of decay is of the type $e^{-c R^{4/3}}$. Actually using the same method we have used here in the discrete setting and Lemma \ref{lem31} below, one can prove a lower bound for solutions to the problem $(\Delta+V)u=0$ and the lower bound is precisely given in terms of $e^{-cR^{4/3}}$. 

\begin{lemma}\label{lem31}
The inequality
\[\begin{aligned}
\frac{\alpha^{3/2}}{R^2}\|e^{\alpha\left|\frac{x}{R}+3e_1\right|^2}g\|_2 + \frac{\alpha^{1/2}}{R}\|e^{\alpha\left|\frac{x}{R}+3e_1\right|^2}\nabla g\|_2\le  \|e^{\alpha\left|\frac{x}{R}+3 e_1\right|^2}\Delta g\|_2
\end{aligned}\]
holds, when  $g\in H^2(\mathbb{R}^d)$ has its (compact) support contained in the set $\{|x/R+3e_1|^2\ge 1\}$.
\end{lemma}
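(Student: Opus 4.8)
The plan is to run the by now standard conjugation-and-commutator scheme for Carleman estimates (as in \cite{ekpv1}, and, in the discrete setting, exactly as in the proof of Lemma 2.1 above), which is particularly clean here because the weight has a \emph{constant} Hessian. Write $\phi(x)=\alpha\bigl|\tfrac{x}{R}+3e_1\bigr|^2$ and set $f=e^{\phi}g$; since $g\in H^2(\mathbb{R}^d)$ has compact support in $\{|x/R+3e_1|^2\ge1\}$, so does $f$. Conjugating the Laplacian,
\[
e^{\phi}\Delta\bigl(e^{-\phi}f\bigr)=\Delta f-2\nabla\phi\cdot\nabla f+\bigl(|\nabla\phi|^2-\Delta\phi\bigr)f=:Sf+Af,
\]
where $Sf=\Delta f+\bigl(|\nabla\phi|^2-\Delta\phi\bigr)f$ is symmetric and $Af=-2\nabla\phi\cdot\nabla f-(\Delta\phi)f$ is skew-symmetric on $L^2(\mathbb{R}^d)$. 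As in Lemma 2.1, expanding $\|Sf+Af\|_2^2$ and discarding the nonnegative terms $\|Sf\|_2^2+\|Af\|_2^2$ gives
\[
\|e^{\phi}\Delta g\|_2^2\ \ge\ \langle[S,A]f,f\rangle .
\]

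The next and main step is to evaluate the commutator. For this weight, $\nabla\phi=\tfrac{2\alpha}{R}\bigl(\tfrac{x}{R}+3e_1\bigr)$ and, crucially, $D^2\phi=\tfrac{2\alpha}{R^2}\,\mathrm{Id}$ is constant; hence $\Delta\phi\equiv\tfrac{2\alpha d}{R^2}$ is constant and all third derivatives of $\phi$, as well as $\Delta^2\phi$, vanish. I would carry out the usual expansion of $[S,A]$ (it requires a single integration by parts in the second-order term $-4\sum_{i,k}(\partial_i\partial_k\phi)\,\partial_i\partial_k f$); every contribution carrying a derivative of $\Delta\phi$ or a third derivative of $\phi$ then drops out, leaving
\[
\langle[S,A]f,f\rangle=4\!\int_{\mathbb{R}^d} D^2\phi\,\nabla f\cdot\nabla\bar f\,dx+4\!\int_{\mathbb{R}^d}\bigl(\nabla\phi\cdot D^2\phi\,\nabla\phi\bigr)|f|^2\,dx .
\]
Substituting $D^2\phi=\tfrac{2\alpha}{R^2}\mathrm{Id}$ and $|\nabla\phi|^2=\tfrac{4\alpha^2}{R^2}\bigl|\tfrac{x}{R}+3e_1\bigr|^2$ gives
\[
\langle[S,A]f,f\rangle=\frac{8\alpha}{R^2}\,\|\nabla f\|_2^2+\frac{32\alpha^3}{R^4}\!\int_{\mathbb{R}^d}\Bigl|\frac{x}{R}+3e_1\Bigr|^2|f|^2\,dx .
\]
Because there are no error terms, no lower bound on $\alpha$ is needed here, in contrast with Lemma 2.1.

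It remains to pass back to $g$. On the support of $f$ we have $|x/R+3e_1|\ge1$, so $\int|x/R+3e_1|^2|f|^2\ge\|f\|_2^2$, and therefore $\tfrac{\alpha^{3/2}}{R^2}\|e^{\phi}g\|_2=\tfrac{\alpha^{3/2}}{R^2}\|f\|_2\le\tfrac1{\sqrt{32}}\|e^{\phi}\Delta g\|_2$. For the gradient, $e^{\phi}\nabla g=\nabla f-(\nabla\phi)f$, whence $|e^{\phi}\nabla g|^2\le2|\nabla f|^2+\tfrac{8\alpha^2}{R^2}\bigl|\tfrac{x}{R}+3e_1\bigr|^2|f|^2$, so that
\[
\frac{\alpha}{R^2}\|e^{\phi}\nabla g\|_2^2\le\frac{2\alpha}{R^2}\|\nabla f\|_2^2+\frac{8\alpha^3}{R^4}\!\int_{\mathbb{R}^d}\Bigl|\frac{x}{R}+3e_1\Bigr|^2|f|^2\,dx=\tfrac14\langle[S,A]f,f\rangle\le\tfrac14\|e^{\phi}\Delta g\|_2^2 ,
\]
i.e.\ $\tfrac{\alpha^{1/2}}{R}\|e^{\phi}\nabla g\|_2\le\tfrac12\|e^{\phi}\Delta g\|_2$. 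Adding the two estimates and using $\tfrac1{\sqrt{32}}+\tfrac12<1$ yields the stated inequality. The one delicate point is the commutator computation in the second paragraph — checking that the constant-Hessian structure truly annihilates every lower-order and error term — whereas the final step closes precisely thanks to the slack between the constants $8,32$ and $1$, which is also why no smallness of $1/R$ or largeness of $\alpha$ enters the hypotheses.
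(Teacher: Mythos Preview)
The paper states Lemma 3.1 without proof, so there is no explicit argument to compare against; your proof is correct and follows precisely the conjugation-and-commutator scheme the paper carries out in Lemma 2.1 for the discrete operator. The decomposition $e^{\phi}\Delta(e^{-\phi}f)=Sf+Af$, the commutator identity $\langle[S,A]f,f\rangle=4\int D^2\phi\,\nabla f\cdot\nabla\bar f+4\int(D^2\phi\,\nabla\phi\cdot\nabla\phi)|f|^2$ (all lower-order terms vanishing because $D^2\phi$ is constant), and the recovery of $\|e^{\phi}\nabla g\|_2$ via $e^{\phi}\nabla g=\nabla f-(\nabla\phi)f$ are all handled correctly, and the numerical check $\tfrac{1}{\sqrt{32}}+\tfrac{1}{2}<1$ closes the estimate with the constant $1$ exactly as stated.
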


\begin{theorem}\label{thm31}
Let $u\in H^2$ be a solution to $\Delta u+Vu=0$ such that
\[
\|V\|_\infty\le L,\ \ \int_{|x|<1}|u(x)|^2\,dx\ge 1,\ \ \int(|u(x)|^2+|\nabla u(x)|^2)\,dx\le A^2,
\]
then, there is $c=c(L,A)$ such that
\[
\lambda(R)\equiv\left(\int_{R-1<|x|<R}(|u(x)|^2+|\nabla u(x)|^2)\,dx\right)^{1/2}\ge c e^{-c R^{4/3}}.
\]

Furthermore, there is $\eta_0$ depending on the dimension such that if
\[
\int_{\mathbb{R}^d}e^{2\eta_0|x|^{4/3}}|u(x,t)|^2\,dx<+\infty,
\]
then $u\equiv0.$
\end{theorem}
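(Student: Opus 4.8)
The plan is to transcribe, in the continuous elliptic setting, the two--step scheme behind Theorems 1.1 and 1.2, using Lemma 3.1 in place of Lemma 2.1 and the weight $R^{4/3}$ in place of $R\log R$.

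\textbf{Step 1 (lower bound).} I would fix $C^\infty$ cut-offs $\theta^R$ and $\mu$ as in \eqref{cutoff}; since there is no time variable here, the role of $\varphi(t)e_1$ is played by the constant shift $3e_1$. Then I would apply Lemma 3.1 to
\[
g(x)=\theta^R(x)\,\mu\!\left(\tfrac{x}{R}+3e_1\right)u(x),
\]
which lies in $H^2$ and is supported in $\{|x|\le R\}\cap\{|x/R+3e_1|^2\ge 1\}$. Expanding $\Delta g=\theta^R\mu\,\Delta u+u\,\Delta(\theta^R\mu)+2\nabla(\theta^R\mu)\cdot\nabla u$ and using $|\Delta u|\le L|u|$, the right-hand side of Lemma 3.1 is at most $L\,\|e^{\alpha|x/R+3e_1|^2}g\|_2$ plus error terms supported where $\nabla\mu$ or $\Delta\mu$ is nonzero, i.e. in $\{1\le|x/R+3e_1|\le 2\}$, or where $\nabla\theta^R$ or $\Delta\theta^R$ is nonzero, i.e. in the annulus $\{R-1\le|x|\le R\}$.

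Next I would choose $\alpha=cR^{4/3}$, so that the leading constant on the left of Lemma 3.1 equals $\alpha^{3/2}/R^2=c^{3/2}$, independent of $R$; for $c$ large (depending on $L$) this absorbs the potential term. For the surviving errors I would run the support bookkeeping of the proof of Theorem 1.1: on $\{1\le|x/R+3e_1|\le 2\}$ the weight is $\le e^{4\alpha}$ and $|u|,|\nabla u|$ contribute at most $A$; on the annulus $\{R-1\le|x|\le R\}$ the weight is $\le e^{16\alpha}$ and the contribution is exactly $\lambda(R)$ — this is why $\lambda(R)$ must include the gradient, since $[\Delta,\theta^R]$ produces $\nabla u$ there. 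On $\{|x|<1\}$ both cut-offs equal $1$, so $g=u$, and since $\int_{|x|<1}|u|^2\ge1$ while $|x/R+3e_1|^2\ge(3-1/R)^2$ there, the left-hand side is $\ge c^{3/2}e^{\alpha(3-1/R)^2}$. Collecting,
\[
c^{3/2}e^{\alpha(3-1/R)^2}\le c_d\big(e^{16\alpha}\lambda(R)+e^{4\alpha}A\big),
\]
and since $(3-1/R)^2>4$, for $R\ge R_0(d,A,L)$ the term $e^{4\alpha}A$ is absorbed on the left, which gives $\lambda(R)\ge c\,e^{\alpha((3-1/R)^2-16)}\ge c\,e^{-cR^{4/3}}$.

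\textbf{Step 2 (uniqueness).} If $u\not\equiv0$ then, by linearity and a translation (at the cost of replacing $\eta_0$ by a dimensional multiple of itself), we may assume the normalizations of Step 1, so $\lambda(R)\ge c\,e^{-cR^{4/3}}$. From $\int e^{2\eta_0|x|^{4/3}}|u|^2\,dx<+\infty$ I would first transfer the weighted decay to $\nabla u$ by a Caccioppoli estimate: multiplying $\Delta u=-Vu$ by $\bar u\,e^{2\eta|x|^{4/3}}\chi_R^2$ with $\chi_R$ a cut-off equal to $1$ on $\{|x|<R\}$, integrating by parts, and using Cauchy--Schwarz together with $|\nabla e^{2\eta|x|^{4/3}}|\lesssim\eta|x|^{1/3}e^{2\eta|x|^{4/3}}$, one gets, letting $R\to\infty$, that $\int e^{2\eta|x|^{4/3}}|\nabla u|^2\,dx<+\infty$ for every $\eta<\eta_0$. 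Hence, for $R$ large,
\[
\lambda(R)^2\le e^{-2\eta(R-1)^{4/3}}\int e^{2\eta|x|^{4/3}}\big(|u|^2+|\nabla u|^2\big)\,dx\le C\,e^{-\eta R^{4/3}},
\]
so $\lambda(R)\le C\,e^{-\eta R^{4/3}/2}$. Taking $\eta_0$, and hence $\eta$, larger than a suitable multiple of the rate constant from Step 1 and letting $R\to\infty$ contradicts the lower bound, so $u\equiv0$.

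\textbf{Main obstacle.} Granting Lemma 3.1, the two delicate points would be: (i) the support analysis in Step 1, where one must check that the cut-off errors live only in $\{1\le|x/R+3e_1|\le 2\}$ and in the annulus $\{R-1\le|x|\le R\}$, so that the exponent gap $(3-1/R)^2>4$ is available to absorb the $A$-term while all the relevant mass sits at scale $e^{16\alpha}$; and (ii) the Caccioppoli step, where the polynomial factor $|x|^{1/3}$ coming from $\nabla(|x|^{4/3})$ must be absorbed into the exponential at the cost of only an arbitrarily small loss in the exponent, so that no decay is wasted. Both are routine but require care with the constants; everything else is a direct translation of the proofs of Theorems 1.1 and 1.2.
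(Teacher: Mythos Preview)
Your proposal is correct and is precisely the argument the paper has in mind: the paper does not give a detailed proof of this theorem, only remarking that ``using the same method\dots and Lemma~3.1'' one gets the $e^{-cR^{4/3}}$ bound, and your Step~1 is exactly the transcription of the proof of Theorem~1.1 with $\varphi\equiv 3$, Lemma~3.1 in place of Lemma~2.1, and the choice $\alpha=cR^{4/3}$ (so that $\alpha^{3/2}/R^2=c^{3/2}$ absorbs the potential). The one genuine addition you make is the Caccioppoli step in Step~2 to pass the weighted $L^2$ decay from $u$ to $\nabla u$; this is necessary here because, unlike in the discrete case, $\lambda(R)$ involves $|\nabla u|^2$, and the paper does not comment on it --- your handling of the $|x|^{1/3}$ factor by giving up $\epsilon$ in the exponent is the standard and correct way to do it.
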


Moreover, in \cite{m} a counterexample of solution with this rate of decay is constructed, which implies that the decay is sharp. It would be interesting to know if the sharp decay in the discrete setting is $e^{-|j|\log(|j|+1)}$ by constructing a similar counterexample.

Nevertheless, in \cite{bk} the lower bound given in Theorem \ref{thm31} is improved in the sense that it is given in a smaller region. Instead of the ring $R-1<|x|<R$, they proved, for bounded solutions that for $x_0\in\mathbb{R}^d$ such that $|x_0|=R$,
\[
\max_{|x-x_0|\le 1}|u(x)|> c e^{-c \log R R^{4/3}},
\]
that is, they consider a ball of radius 1 centered at a point on the sphere of radius $R$ centered at the origin. This suggests that the discrete setting could have the same behavior. However, as was pointed out in \cite{k} this is not the case,  due to the existence of a counterexample (unpublished) of D. Jerison and C. Kenig who kindly sent us the details of it. For the sake of completeness we include below the counterexample appropriately modified for our needs.
More concretely we will construct a function $u$ satisfying the following
properties:

\begin{enumerate}
\item $u\in\ell^2(\mathbb{Z}^2)$,
\item $u(0,0)=1$,
\item $u(j,k)=0$ in the region $\Omega_R=\{|j|+|k-R|\le 2\},$ for some $R\in\mathbb{N}$ (large),
\item There exists a bounded potential $V$ such that $(\Delta_d+V)u=0$.
\end{enumerate}

To establish 4, one simply needs to set $V(j,k)=-\frac{\Delta_du(j,k)}{u(j,k)}$ and check that the potential defined this way is bounded. Notice that for this to make sense, we need to have $\Delta_du(j,k)=0$ whenever $u(j,k)=0$, which is the case of region $\Omega_R$. To do so, let
\[
\partial\Omega_R=\{(j,k)\in\mathbb{Z}^2:|j|+|k-R|=3\},
\] 
which corresponds to the region of those points outside $\Omega_R$ where at least one of the nearest neighbors $(j,k\pm1),\ (j\pm1,k)$ lies in $\Omega_R$. We recall that the discrete Laplacian of $u$ at the point $(j,k)$ only depends on the values of $u$ at $(j,k\pm1),\ (j\pm1,k)$ and at $(j,k)$. Then, one only needs to define $u$ properly on $\partial\Omega_R$ to have that both $u$ and $\Delta_du$ vanish on $\Omega_R$. This can be done in different ways, for example, defining $u$ as in Figure \ref{u vanish}.

\begin{figure}\centering
\begin{tikzpicture}
\node at (0,0) {0};\node at (1,0) {0};\node at (-1,0) {0};\node at (2,0) {0};\node at (-2,0) {0};\node at (0,1) {0};\node at (1,1) {0};\node at (1,-1) {0};\node at (0,2) {0};\node at (0,-2) {0};\node at (0,-1) {0};\node at (-1,1) {0};\node at (-1,-1) {0};
\node at (0,-3) {$2^{-m}$};\node at (0,3) {$2^{-m}$};
\node at (1,-2) {$-2^{-m-1}$};\node at (1,2) {$-2^{-m-1}$};\node at (-1,-2) {$-2^{-m-1}$};\node at (-1,2) {$-2^{-m-1}$};
\node at (2,-1) {$2^{-m-1}$};\node at (2,1) {$2^{-m-1}$};\node at (-2,-1) {$2^{-m-1}$};\node at (-2,1) {$2^{-m-1}$};
\node at (3,0) {$-2^{-m}$};\node at (-3,0) {$-2^{-m}$};
\end{tikzpicture}
\caption{Graphic representation of the region $\Omega_R\cup\partial\Omega_R$. In $\Omega_R$ $u$ vanishes and due to the construction of $u$ in $\partial\omega_R$ its discrete Laplacian at any point in the $\Omega_R$ also vanishes.}\label{u vanish}
\end{figure}

Since both $u$ and $\Delta_du$ vanish on $\Omega_R$, defining $V(j,k)=0$ in $\Omega_R$ gives that $u$ solves the equation in $\Omega_R$. It remains to define $u(j,k)=e^{-|j|-|k|}$ outside $\Omega_R\cup\partial\Omega_R$, and then we set $V(j,k)=-\frac{\Delta_du(j,k)}{u(j,k)}$.

It is clear that $V$ is well defined and that $u$, which is a solution to $(\Delta_d+V)u=0$ satisfies the first three properties of the list. It remains to prove that $V$ is bounded. Again, as $\Delta_du(j,k)$ only depends on the values of $u$ at $(j,k)$ and its nearest neighbors,
\[\begin{aligned}
|\Delta_du(j,k)|&=|2^{-|j+1|-|k|}+2^{-|j-1|-|k|}+2^{-|j|-|k+1|}+2^{|j|-|k-1|}-4\times2^{-|j|-|k|}|\\
&\le 4\times2^{-|j|-|k|+1}+4\times 2^{-|j|-|k|}=12 u(j,k),
\end{aligned}\]
for $(j,k)$ such that $|j|+|k-R|\ge 5$. For $(j,k)$ such that $3\le |j|+|k-R|\le 4$ we get bounds of the form
\[
|\Delta_du(j,k)|\le (c+2^{R-m})u(j,k),
\]
where $c$ is a universal constant, and $m$ is the number we use to define $u$ in $\partial\Omega_R$. Taking $m=R$ we see that not only is the potential bounded, but the bound does not depend on $R$.

\section{Acknowledgments} We would like to thank C. E. Kenig and E. Malinnikova for fruitful conversations. The work leading to this article was completed while both authors were in residence at the Mathematical Sciences Research Institue in Berkeley, California, during the Fall 2015 semester as Program Associate and Simons Visiting Professor, respectively. Both authors are partially supported by the projects  MTM2011-24054, IT641-13. The first author is also supported by the predoctoral grant BFI-2011-11 of the Basque Government and the IdEx 2016 Postdoctoral Program, and the second author by ERCEA Advanced Grant 2014 669689 - HADE and MINECO Severo Ochoa excellence accreditation SEV-2013-0323.  We are thankful to the referees for constructive comments that have improved the paper.

\section{\refname}

\end{document}